\documentclass[12pt,reqno]{amsart}

\usepackage{amsaddr}

\usepackage{a4wide}

\usepackage[swedish,english]{babel}
\usepackage[T1]{fontenc}
\usepackage[latin1]{inputenc}

\usepackage{amsmath}
\usepackage{amssymb,bbm}
\usepackage{amsthm}
\usepackage{enumerate}
\usepackage{hyperref}

\newcommand{\Z}{\mathbb{Z}}
\newcommand{\Q}{\mathbb{Q}}

\newcommand{\F}{\mathbb{F}}

\DeclareMathOperator{\degree}{deg}
\DeclareMathOperator{\Char}{char}

\theoremstyle{plain}
\newtheorem{thm}{Theorem}[section]
\newtheorem{lem}[thm]{Lemma}
\newtheorem{prop}[thm]{Proposition}

\theoremstyle{definition}
\newtheorem{defn}[thm]{Definition}

\newtheorem{exmp}[thm]{Example}

\newtheorem{rem}[thm]{Remark}

\begin{document}

\title{Bimodules in differential polynomial rings}

\author{Johan \"{O}inert}
\address[Johan \"{O}inert]{Department of Mathematics and Natural Sciences, 
Blekinge Institute of Technology, 
SE-371 79 Karlskrona, Sweden \\
and \\
Department of Engineering, 
University of Sk\"{o}vde, 
SE-541 28 Sk\"{o}vde, Sweden}
\email{johan.oinert@bth.se}

\date{\today}

\subjclass[2020]{16S32, 16D20, 16D30}
\keywords{differential polynomial ring, bimodule, sub-bimodule, simple ring, derivation, outer}

\begin{abstract}
We study the $R$-sub-bimodule structure of differential polynomial rings $R[x;\delta]$
by introducing the notion of \emph{strong simplicity}, requiring each nonzero $R$-sub-bimodule of $R[x;\delta]$ to be either $R[x;\delta]$ or the truncation $\sum_{i=0}^n R x^i$ for some $n \in \Z_{\geq 0}$.
Our main result gives a complete characterization: $R[x;\delta]$ is strongly simple if and only if $R$ is simple, $\Char(R)=0$, and the derivation $\delta$ is outer. 
We provide examples illustrating both when strong simplicity fails and when it holds.
\end{abstract}

\maketitle

\pagestyle{headings}


\section{Introduction}

Given a discrete group $G$ acting on a von Neumann algebra $M$ by $*$-automorphisms, one may
define a crossed product von Neumann algebra $M \rtimes G$ containing $M$ as a subalgebra.
Under the assumption that $M$ is a \emph{factor}
and that the action of $G$ on $M$ is \emph{outer}, 
Cameron and Smith \cite{CameronSmith2015}
showed that there is a bijection between subsets of $G$ and (closed) $M$-sub-bimodules of $M \rtimes G$ (see \cite[Thm.~4.4]{CameronSmith2015}).
In \cite{Oinert2017}, \"{O}inert proved algebraic analogues of Cameron and Smith's result for 
$G$-crossed products and further generalized it to the setting of group-graded rings.

The aim of the present article is to obtain analogous results in the context of differential polynomial rings.

Throughout this article, let $R$ be a unital ring that is not the zero ring.
Furthermore, let $\delta : R \to R$ be a \emph{derivation}, i.e. $\delta$ is additive and satisfies Leibniz's rule;
$\delta(ab)=\delta(a)b+a\delta(b)$, for all $a,b\in R$.
The corresponding \emph{differential polynomial ring} $R[x;\delta]$ is the free left $R$-module with $\{x^i\}_{i\in \Z_{\geq 0}}$ as its basis, and with 
multiplication defined by $x^i x^j = x^{i+j}$, for non-negative integers $i,j$, and the rule
\begin{equation}
	xr = rx + \delta(r)
\end{equation}
for every $r\in R$. 
One can verify that $R[x;\delta]$ is an associative and unital ring (see e.g. \cite{Nystedt2013} or \cite[Chap.~2]{GoodearlWarfield}).

Note that
\begin{displaymath}
	R \subseteq R+Rx \subseteq R+Rx+Rx^2 \subseteq \ldots \subseteq \sum_{i=0}^k Rx^i \subseteq \ldots \subseteq R[x;\delta]
\end{displaymath}
is an infinite chain of $R$-sub-bimodules of $R[x;\delta]$. It is natural to ask when every nonzero $R$-sub-bimodule of $R[x;\delta]$ appears in the above chain.

\begin{defn}\label{DefBM}
A differential polynomial ring $R[x;\delta]$ is said to be \emph{strongly simple}
if each nonzero $R$-sub-bimodule of $R[x;\delta]$
is equal to $R[x;\delta]$ or $\sum_{i=0}^n R x^i$ for some $n \in \Z_{\geq 0}$.
\end{defn}

Recall that a derivation $\delta : R \to R$ is said to be \emph{inner} if there is some $a\in R$ such that $\delta(r)=ar-ra$ for every $r\in R$.
A derivation that is not inner is called \emph{outer}.
The center of $R$ will be denoted by $Z(R)$,
and the centralizer of $R$ in an overring $S$
is the set $C_S(R):=\{s \in S \mid sr=rs \text{ for all } r\in R\}$.
We establish the following characterization of strong simplicity in differential polynomial rings.

\begin{thm}\label{thm:Main}
Let $S:=R[x;\delta]$ be a differential polynomial ring.
The following four assertions are equivalent:
\begin{enumerate}[{\rm (i)}]
	\item $R[x;\delta]$ is strongly simple.
	\item $R$ is a simple ring with $\Char(R)=0$ and $S$ is simple.
	\item $R$ is a simple ring with $\Char(R)=0$ and $\delta$ is outer.
	\item $R$ is a simple ring with $\Char(R)=0$ and $C_S(R)=Z(R)$.
\end{enumerate}
\end{thm}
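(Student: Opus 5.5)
The plan is to prove (i)$\Rightarrow$(ii)$\Rightarrow$(iii)$\Rightarrow$(iv)$\Rightarrow$(i). For the middle equivalences I will rely on the classical theory of simple differential polynomial rings (Jordan, Cozzens--Faith; see \cite[Chap.~2]{GoodearlWarfield}). In characteristic $0$, $R[x;\delta]$ is simple if and only if $R$ is $\delta$-simple and $\delta$ is outer. When $R$ is simple it is automatically $\delta$-simple, so (ii)$\Leftrightarrow$(iii).

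For (iii)$\Leftrightarrow$(iv), I would compute $C_S(R)$ directly. Take $s=\sum_{i=0}^n a_ix^i\in C_S(R)$ with $a_n\neq 0$ and $n\geq 1$. Expanding $sr=rs$ in the top two degrees gives:
\begin{itemize}
\item from the $x^n$ coefficient, $a_n\in Z(R)$;
\item from the $x^{n-1}$ coefficient, $n a_n\delta(r)=ra_{n-1}-a_{n-1}r$ for all $r\in R$.
\end{itemize}
Since $R$ is simple, $Z(R)$ is a field, and $\Char(R)=0$ makes $n a_n$ invertible and central. Hence $\delta$ is the inner derivation given by $-(na_n)^{-1}a_{n-1}$. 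So if $\delta$ is outer then $C_S(R)\subseteq R$, and therefore $C_S(R)=Z(R)$. Conversely, if $\delta=[a,\cdot]$ is inner, then $x-a$ is a nonzero element of $C_S(R)$ outside $Z(R)$.

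For (i)$\Rightarrow$(ii), I would argue in three steps.
\begin{itemize}
\item \emph{$R$ is simple.} If $I$ is a nontrivial proper ideal of $R$, then $I$ (in degree $0$) is a nonzero sub-bimodule not in the chain.
\item \emph{$\Char(R)=0$.} If $p\cdot 1=0$, consider the bimodule generated by $x^p$. In characteristic $p$ we have $[x^p,r]=\delta^p(r)$ (this follows from $x^pr=\sum\binom{p}{k}\delta^k(r)x^{p-k}$), so $RSR\ni x^p$ stays inside $R+Rx^p$, which is not in the chain. One should double-check this when $\Char(R)$ is a composite $m$; passing to a prime $p\mid m$ via $mR=0$ needs care, and I would instead use $x^m$ and the binomial coefficients $\binom{m}{k}$ as appropriate.
\item \emph{$S$ is simple.} A nonzero ideal is a sub-bimodule; an ideal equal to $\sum_{i\le n}Rx^i$ is impossible since multiplying by $x$ raises the degree. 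Hence the ideal is all of $S$.
\end{itemize}

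For (iv)$\Rightarrow$(i), which I expect to be the main step, let $M$ be a nonzero sub-bimodule and pick $s\in M$ of minimal degree $n$. I want to show $Rx^n+(\text{lower})\subseteq M$ leads to all of $\sum_{i\le n}Rx^i\subseteq M$. The tools are:
\begin{itemize}
\item since $R$ is simple, $\sum_j b_j s c_j$ realises any leading coefficient, in particular $1$;
\item commutators $[s,r]$ have degree $n-1$ with leading coefficient $n\delta(r)$ plus $[a_{n-1},r]$ terms.
\end{itemize}
Outerness, together with $\Char(R)=0$, gives some $r$ for which this commutator is nonzero, and induction then descends through all degrees. Minimal-degree elements are forced to be monic-like elements, and the step from one such element to the full $Rx^n+\dots+R$ is exactly where $C_S(R)=Z(R)$ is used. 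Finally, $M$ is either of bounded degree, giving the truncation $\sum_{i\le N}Rx^i$ with $N$ the maximal degree (each degree is filled by the same argument applied to top degrees), or unbounded, giving $M=S$.

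The delicate point is showing that $M$ contains $x^n$ itself and not just a degree-$n$ element with uncontrolled lower terms. I would argue by induction on $n$ that $M\supseteq\sum_{i<n}Rx^i$ via commutators, and then subtract those lower terms.
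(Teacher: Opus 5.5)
Your cycle (i)$\Rightarrow$(ii)$\Rightarrow$(iii)$\Rightarrow$(iv)$\Rightarrow$(i) and your tools are the paper's:
\begin{itemize}
\item the three necessary conditions forced by strong simplicity;
\item the top-two-coefficient formula for $rc-cr$, which gives $C_S(R)=Z(R)$ from outerness and also drives the one-degree descent inside a sub-bimodule;
\item monic normalisation via $\sum_j b_j s c_j$;
\item inductive subtraction of lower-order terms to obtain $x^k\in M$.
\end{itemize}
Citing Jordan's theorem for (ii)$\Rightarrow$(iii) is fine. Only the easy direction is needed, and the paper proves it directly: if $\delta(r)=ar-ra$, then $a-x$ is central in $S$ and generates a proper nonzero ideal.

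The one step that fails as written is your remedy for composite characteristic. Using $x^m$ when $\Char(R)=m$ is composite does not work. The coefficients $\binom{m}{k}$, $0<k<m$, need not be divisible by $m$ (e.g.\ $\binom{4}{2}=6$), so $R+Rx^m$ need not be closed under right multiplication by $R$.

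No such device is needed, because you have just shown that $R$ is simple, and a simple ring has characteristic $0$ or a prime. Indeed, if $m=ab$ with $1<a,b<m$, then $a1_R$ is a nonzero central element. So $a1_R\,R=R$ forces $a1_R$ to be invertible, which contradicts $(a1_R)(b1_R)=0$. This is exactly how the paper proceeds in Lemma~\ref{lem:CharZeroNecessary}. With $p$ prime, $R+Rx^p$ (or $Rx^pR$, not ``$RSR$'') is a sub-bimodule containing $x^p$ but not $x$, hence outside the chain.

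A smaller point in (iv)$\Rightarrow$(i): drop the opening choice of $s$ of \emph{minimal} degree. The same commutator argument shows the minimal degree is $0$. The degree you must descend from is the maximal degree $N$ of $M$, or arbitrarily large degrees when $M$ is unbounded, which your closing paragraph handles correctly.
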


It is worth noting that the centralizer condition appearing in (iv) above is very similar to the centralizer condition appearing in the corresponding characterization in the setting of strongly group-graded rings (see \cite[Thm.~2]{Oinert2017}).

This article is organized as follows.
%
In Section~\ref{Sec:Necessary}, we establish necessary conditions for strong simplicity (see Proposition~\ref{Prop:NecessaryConditions}).
We also present two well-known examples of differential polynomial rings that are not strongly simple (see Example~\ref{ex:PolynomialRing} and Example~\ref{ex:WeylAlgebra}). 
In Section~\ref{Sec:Sufficient}, we establish sufficient conditions for strong simplicity (see Proposition~\ref{prop:Sufficient}).
We also present two examples of differential polynomial rings that are strongly simple (see Example~\ref{ex:RationalFunctions} and Example~\ref{ex:RationalFunctionsMatrix}).
In Section~\ref{Sec:MainResult}, we prove Theorem~\ref{thm:Main}.

\section{Necessary conditions}\label{Sec:Necessary}

In this section, we establish necessary conditions for strong simplicity.

Henceforth, if $a \in R[x;\delta]$, then we write
$a= \sum_{k\geq 0} a_k x^k$
for its unique expression with coefficients $a_k \in R$, 
all but finitely many of which are zero.
Thus $a_k$ is uniquely determined by $a$ and denotes the coefficient of $a$ in front of $x^k$.
If $a\in R[x;\delta]$ is nonzero, then we define $\degree(a):=\max\{k \in \Z_{\geq 0} \mid a_k \neq 0 \}$.

\begin{lem}\label{lem:SimpleCentralizerOuter}
Let $S:=R[x;\delta]$ be a differential polynomial ring.
Suppose that at least one of the following assertions holds:
\begin{enumerate}[{\rm (i)}]
	\item $S$ is simple.
	\item $C_S(R)=Z(R)$.
\end{enumerate}
Then $\delta$ is outer.
\end{lem}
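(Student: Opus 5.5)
We argue by contraposition: assuming $\delta$ is inner, we show that both (i) and (ii) fail. So suppose there is some $a\in R$ with $\delta(r)=ar-ra$ for all $r\in R$. The natural object to look at is the element
\begin{displaymath}
	y := x - a \in S .
\end{displaymath}
For every $r\in R$ the defining relation $xr=rx+\delta(r)$ gives
\begin{displaymath}
	yr = xr - ar = rx + ar - ra - ar = rx - ra = ry .
\end{displaymath}
Hence $y\in C_S(R)$. Since $\degree(y)=1$, the element $y$ does not lie in $R$, and in particular not in $Z(R)$. Therefore $C_S(R)\neq Z(R)$, so (ii) fails.

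To see that (i) fails, we still use $y$. Because $y$ commutes with $R$, the two-sided ideal generated by $y$ is $SyS=SyR[y]=Sy$. (We write $S=R[y]$, noting $x=y+a$, and $y$ commutes with $y$.) Thus $Sy$ is a two-sided ideal of $S$. It is nonzero, as $y\neq 0$. The main point to check is that $Sy\neq S$, that is, $1\notin Sy$. The ring $S$ is a free left $R$-module with basis $\{y^i\}_{i\geq 0}$, by a triangular change of basis from $\{x^i\}$. The leading coefficient of $y^i$ in $x$ is $1$, so $\{y^i\}$ is again a basis. Every element of $Sy$ then has the form $\sum_i s_i y^{i+1}$ with $s_i\in R$, so its coefficient of $y^0$ is zero. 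Hence $1\notin Sy$, and $S$ is not simple.

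Alternatively, and more cleanly, we can use degrees. Since the leading coefficient of $y$ is $1$, for nonzero $s$ we have $\degree(sy)=\degree(s)+1\geq 1$, so $sy\neq 1$ for every $s\in S$. This gives $Sy\neq S$ directly. The only mildly delicate step in the whole argument is this verification that $y$ generates a proper ideal. We handle it by the degree argument, using that the leading coefficient of $y$ is $1$ and that multiplying by $y$ on the right raises the degree by exactly one.
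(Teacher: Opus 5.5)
Your proof is correct and follows essentially the same route as the paper. The paper also uses the element $a-x\in C_S(R)$ to contradict (ii) directly, and for (i) it shows that this element is central and generates a nonzero proper ideal via the degree argument $\degree((a-x)b)>0$; the only cosmetic difference is that you get centrality from $S$ being generated by $R$ and $y$, whereas the paper checks $x(a-x)=(a-x)x$ using $\delta(a)=0$.
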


\begin{proof}
If $\delta$ is inner, then
there is some $a\in R$ such that $\delta(r)=ar-ra$, for every $r\in R$.
We also have $xr=rx+\delta(r)$, i.e. $\delta(r)=xr-rx$.
Hence, 
$ar-ra=xr-rx \Leftrightarrow (a-x)r=r(a-x)$, for every $r\in R$.

(i) Suppose that $S$ is simple.
Seeking a contradiction, suppose that $\delta$ is inner.
By the above calculation, $a-x \in C_S(R)$.
Using that $\delta(a)=a^2-a^2=0$, we get that
$x(a-x)=xa-x^2=ax+\delta(a)-x^2=ax-x^2=(a-x)x$.
Thus, $a-x \in Z(R[x;\delta])$.
The ideal of $S$ generated by $a-x$ is nonzero and it must be proper, because otherwise $a-x$ would be invertible 
which is not possible since $\degree((a-x)b)>0$ for any nonzero $b\in R[x;\delta]$. 
This is a contradiction. Hence, $\delta$ is outer.

(ii) Suppose that $C_S(R)=Z(R)$.
Seeking a contradiction, suppose that $\delta$ is inner.
Note that $a-x \in C_S(R)=Z(R) \subseteq R$.
This is a contradiction. Hence, $\delta$ is outer.
\end{proof}

\begin{rem}
The first half of the above lemma has already appeared in \cite[Lem.~4.1.3]{Jordan1975}.
\end{rem}

Recall that in a differential polynomial ring the identity
\begin{equation}\label{eq:CommutationRelation}
	x^n r = \sum_{i=0}^n \binom{n}{i} \delta^{n-i}(r) x^i
\end{equation}
holds for any $r\in R$ and $n\in \Z_{>0}$.

\begin{lem}\label{lem:CharZeroNecessary}
Let $S:=R[x;\delta]$ be a differential polynomial ring which is strongly simple.
Suppose that $R$ is simple.
Then $\Char(R)=0$.
\end{lem}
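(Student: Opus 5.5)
Since $R$ is simple, its characteristic is either $0$ or a prime $p$: if $n\cdot 1=0$ with $n=ab$ and $a,b>1$, then $a\cdot 1$ is central, so it generates a two-sided ideal that is either zero or all of $R$; in the latter case $a\cdot 1$ is a unit and $b\cdot 1=0$. So it is enough to rule out $\Char(R)=p$ for a prime $p$. I will argue by contradiction: assume $\Char(R)=p>0$ and produce a nonzero $R$-sub-bimodule of $S$ that is neither $S$ nor any truncation $\sum_{i=0}^n Rx^i$.

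The natural candidate is built from $x^p$. By \eqref{eq:CommutationRelation} with $n=p$, all binomial coefficients $\binom{p}{i}$ with $0<i<p$ vanish in $R$. Hence
\[
x^p r = r x^p + \delta^p(r)
\]
for all $r\in R$. Moreover $\delta^p$ is again a derivation: by the Leibniz formula $\delta^p(ab)=\sum_i \binom{p}{i}\delta^i(a)\delta^{p-i}(b)=\delta^p(a)b+a\delta^p(b)$. Now set
\[
M := R + R x^p.
\]
This is a left $R$-module. It is also a right $R$-module, because $r'x^p r = r' r x^p + r'\delta^p(r) \in M$. Thus $M$ is a nonzero $R$-sub-bimodule of $S$.

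Next I check that $M$ is not one of the allowed bimodules. It is not $S$, since $x\notin M$ by uniqueness of coefficients. It is not a truncation $\sum_{i=0}^n Rx^i$: for $n\ge p$ such a truncation contains $x$, and since $p\geq 2$ we have $1 \leq p-1$, so for $n<p$ it does not contain $x^p$. This contradicts strong simplicity, so $\Char(R)=0$.

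The main obstacle is only the bookkeeping in the first step, namely the reduction to prime characteristic via simplicity of $R$ and the vanishing of the middle binomial coefficients in characteristic $p$. I note that the argument never needs $\delta^p$ to be nonzero or outer, since $M$ is closed under both multiplications in any case.
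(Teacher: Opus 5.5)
Your proposal is correct and follows essentially the same route as the paper: you reduce to $\Char(R)=p$ prime using simplicity, apply \eqref{eq:CommutationRelation} to get $x^p r = rx^p + \delta^p(r)$, and observe that $R+Rx^p$ is a nonzero $R$-sub-bimodule that is neither $S$ nor a truncation since $p>1$. You additionally spell out the reduction to prime characteristic and the exclusion of each truncation, which the paper leaves implicit. Your observation that $\delta^p$ is a derivation is correct but not needed.
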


\begin{proof}
By assumption, $\Char(R)\neq 1$.
Seeking a contradiction, suppose that $\Char(R)\neq 0$.
By simplicity of $R$, we must have $\Char(R)=p$ for some prime number $p$.
Recall that $p$ is a factor in $\binom{p}{i}$ for $i\in \{1,\ldots,p-1\}$. 
Using \eqref{eq:CommutationRelation}, we see that $x^p r=rx^p+\delta^p(r)$ for every $r\in R$.
Hence, $M:=R + Rx^p$ is an $R$-sub-bimodule of $R[x;\delta]$.
Since $p>1$, this contradicts strong simplicity. Thus, $\Char(R)=0$.
\end{proof}

\begin{prop}\label{Prop:NecessaryConditions}
Let $S:=R[x;\delta]$ be a differential polynomial ring which is strongly simple.
The following five assertions hold:
\begin{enumerate}[{\rm (a)}]
	\item $S$ is a simple ring.
	\item $R$ is a simple ring.
	\item $C_S(R)=Z(R)$.
	\item $\delta$ is outer.
	\item $\Char(R)=0$.
\end{enumerate}
\end{prop}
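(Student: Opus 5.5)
The plan is to establish (b) and (a) directly from strong simplicity, and to get (c) from the structure of $C_S(R)$. After that, (d) follows from Lemma~\ref{lem:SimpleCentralizerOuter} and (e) from Lemma~\ref{lem:CharZeroNecessary}.

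For (b), let $I$ be a nonzero two-sided ideal of $R$. Then $I[x;\delta]:=\sum_{k\geq 0} I x^k$ is an $R$-sub-bimodule of $S$. Left multiplication by $R$ clearly preserves it. For right multiplication, \eqref{eq:CommutationRelation} gives
\[
x^k r=\sum_{i=0}^{k}\binom{k}{i}\delta^{k-i}(r)x^i ,
\]
so $a x^k r\in \sum_i I x^i$ for $a\in I$, since $I$ is a left ideal. By strong simplicity, $I[x;\delta]$ is either $S$ or a truncation $\sum_{i=0}^n Rx^i$. A truncation is impossible, because $I[x;\delta]$ contains $x^{n+1}$-terms, e.g.\ $a x^{n+1}$ for any nonzero $a\in I$. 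Hence $I[x;\delta]=S$. Comparing constant coefficients of $1$ then gives $1\in I$, so $I=R$.

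For (a), let $J$ be a nonzero ideal of $S$. Then $J$ is an $R$-sub-bimodule, so $J=S$ or $J=\sum_{i=0}^n Rx^i$. The latter is not closed under left multiplication by $x$, since $x\cdot x^n=x^{n+1}$. Hence $J=S$, and $S$ is simple.

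For (c), the inclusion $Z(R)\subseteq C_S(R)$ is immediate. For the converse, I consider $a\in C_S(R)$ and the $R$-sub-bimodule $RaR = Ra$; this is a bimodule because $ar=ra$. I expect this to be the main obstacle. If $\degree(a)=n\ge 1$, strong simplicity forces $Ra=\sum_{i=0}^n Rx^i$, so $x^n=ra$ for some $r\in R$. I would then compare degrees and coefficients. Writing $a=\sum a_kx^k$, the degree-$n$ coefficient gives $ra_n=1$, and the degree-$0$ coefficient gives $ra_0=0$. Also $1=r'a$ for some $r'\in R$, whose degree-$n$ coefficient gives $r'a_n=0$.

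To get a contradiction, I would exploit centralizing. Comparing the top coefficients of $ar=ra$ shows $a_n\in Z(R)$. Since $ra_n=1$, $a_n$ is invertible and central. From $r'a_n=0$ we then get $r'=0$, which contradicts $1=r'a$. Therefore $\degree(a)=0$, i.e.\ $a\in R$, and since $a$ commutes with $R$ we get $a\in Z(R)$. The case $a=0$ is trivial.

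Finally, (d) follows from (c), or from (a), via Lemma~\ref{lem:SimpleCentralizerOuter}. Since (b) holds, Lemma~\ref{lem:CharZeroNecessary} gives (e).
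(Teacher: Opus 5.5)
Your proof is correct. For (a), (d) and (e) it is the paper's argument, and the differences elsewhere are local.

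\textbf{Part (b).} The paper never forms $I[x;\delta]$. A nonzero ideal $I$ of $R$ is already a nonzero $R$-sub-bimodule of $S$ lying inside $R=\sum_{i=0}^{0}Rx^i$, so strong simplicity forces $I=R$ at once. Your detour through $I[x;\delta]$ is valid, but it needs the expansion of $x^k r$ and a constant-coefficient comparison that the direct argument avoids. There is also a small slip: closure of $I[x;\delta]$ under right multiplication uses that $I$ is a \emph{right} ideal, since the new coefficients are $\binom{k}{i}a\delta^{k-i}(r)$ with $a\in I$. This is harmless because $I$ is two-sided.

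\textbf{Part (c).} The paper gets invertibility of $a_n$ from centrality of $a_n$ together with simplicity of $R$, which is part (b). It then uses only $1_R\in Ra$. You instead read off a left inverse $r$ of $a_n$ from $x^n=ra$, and use centrality to make it two-sided. This makes (c) independent of (b), at the cost of using one extra membership in $Ra$.
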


\begin{proof}
(a)
Let $I$ be a nonzero ideal of $S$.
Then $I$ is also a nonzero $R$-bimodule.
Note that there cannot be any $n \in \Z_{\geq 0}$ such that $I=\sum_{i=0}^n R x^i$, because then $Ix \nsubseteq I$ would contradict the fact that $I$ is an ideal of $S$.
Therefore, by assumption, we must have $I=S$.

(b)
Each nonzero ideal $J$ of $R$ is a nonzero $R$-bimodule. Hence, $J=R x^0=R$.

(c)
Clearly, $Z(R) \subseteq C_S(R)$.
We now show that $C_S(R) \subseteq Z(R)$.
Take $a= \sum_{i=0}^n a_i x^i \in C_S(R)$ with $a_n \neq 0$.
For any $r\in R$ we have $ar=ra$, and hence $(ar)_n=(ra)_n$, that is $a_n r = r a_n$.
By (b), $R$ is simple and hence $a_n \in Z(R)$ is invertible.
Put $T_n := \sum_{i=0}^n R x^i$.
Since $a \in C_S(R)$, the set $Ra$ is a nonzero $R$-sub-bimodule of $S$.
Moreover, $Ra \subseteq T_n$, and $Ra$ contains $a$, which has degree $n$.
Hence, by strong simplicity, $Ra = T_n$.

Seeking a contradiction, suppose that $n>0$.
Then $1_R \in T_n = Ra$, so $1_R=ra$ for some $r\in R$.
Hence, $(ra)_n=0$, that is $r a_n = 0$.
Using that $a_n$ is invertible we get that $r=0$, contradicting $ra=1_R$.
Therefore, $n=0$. Hence, $a \in R \cap C_S(R) = Z(R)$.
	
(d)
This follows from (a) or (c), and Lemma~\ref{lem:SimpleCentralizerOuter}.

(e)
This follows from (b) and Lemma~\ref{lem:CharZeroNecessary}.
\end{proof}

\begin{exmp}\label{ex:PolynomialRing}
The usual polynomial ring $R[x]$, corresponding to $\delta=0$, is not simple and can therefore never be strongly simple.
\end{exmp}

\begin{exmp}\label{ex:WeylAlgebra}
Let $\F$ be a field with $\Char(\F)=0$ and consider the first Weyl algebra $A_1(\F) := \F\langle x,y \rangle / (xy-yx-1)$.
Then $A_1(\F) \cong \F[y][x;\delta]$ where $\delta := \frac{\partial}{\partial y}$.
It is well known that $A_1(\F)$ is a simple ring. But $\F[y]$ is not simple and hence, by Proposition~\ref{Prop:NecessaryConditions}, $\F[y][x;\delta]$ cannot be strongly simple.
\end{exmp}

\section{Sufficient conditions}\label{Sec:Sufficient}

In this section, we establish sufficient conditions for strong simplicity.

\begin{lem}\label{lem:HighestDegreesCommutation}
Let $S:=R[x;\delta]$ be a differential polynomial ring.
Suppose that $c = \sum_{k=0}^n c_k x^k \in R[x;\delta]$ is a nonzero element of degree $n>0$.
For any $r\in R$, the following equalities hold:
\begin{enumerate}[{\rm (a)}]
	\item $(rc-cr)_n = rc_n - c_n r$
	\item $(rc-cr)_{n-1} = r c_{n-1} - n c_n \delta(r) - c_{n-1}r$
\end{enumerate}
\end{lem}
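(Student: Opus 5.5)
The plan is a direct computation: expand $rc$ and $cr$ separately in the basis $\{x^i\}$ and read off the coefficients of $x^n$ and $x^{n-1}$.

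The product $rc$ is immediate, since left multiplication by $r$ only acts on coefficients:
\begin{displaymath}
rc=\sum_{k=0}^n (r c_k) x^k,
\end{displaymath}
so $(rc)_n = rc_n$ and $(rc)_{n-1}=rc_{n-1}$.

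For $cr$ I write $cr=\sum_{k=0}^n c_k x^k r$ and apply \eqref{eq:CommutationRelation} to each term $x^k r$ with $k>0$; the term $k=0$ is just $c_0 r$. This gives
\begin{displaymath}
cr=\sum_{k=0}^n \sum_{i=0}^k \binom{k}{i} c_k \delta^{k-i}(r) x^i .
\end{displaymath}
The summand for a given pair $(k,i)$ contributes only to the coefficient of $x^i$, and it requires $i\le k\le n$.
\begin{itemize}
\item For the coefficient of $x^n$, the only pair with $i=n$ is $(k,i)=(n,n)$. So $(cr)_n = c_n r$.
\item For the coefficient of $x^{n-1}$, the pairs with $i=n-1$ are $(k,i)=(n,n-1)$ and $(k,i)=(n-1,n-1)$. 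These contribute $\binom{n}{n-1}c_n\delta(r)=n c_n\delta(r)$ and $c_{n-1}r$ respectively. So $(cr)_{n-1}=n c_n \delta(r)+c_{n-1}r$.
\end{itemize}
The case $k=n-1=0$ (when $n=1$) is covered by the trivial term $c_0 r$, so no separate treatment is needed.

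Coefficient extraction is additive, so subtracting gives (a) and (b). There is no real obstacle here; the only point needing care is the bookkeeping that no other pairs $(k,i)$ reach degrees $n$ or $n-1$, which follows from the constraint $i\le k\le n$.
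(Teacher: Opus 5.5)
Your proposal is correct and follows essentially the same route as the paper: expand $cr$ with \eqref{eq:CommutationRelation} and read off the coefficients of $x^n$ and $x^{n-1}$. Your bookkeeping using the constraint $i\le k\le n$, which shows that no other pairs $(k,i)$ reach these degrees, is slightly more explicit than the paper's appeal to ``lower degree terms.''
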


\begin{proof}
Take any $r\in R$.
Using \eqref{eq:CommutationRelation} we get
\begin{align*}
	rc-cr&= \sum_{k=0}^{n} r c_k x^k - \left(\sum_{k=0}^{n} c_k x^k  \right) r 
	= \sum_{k=0}^{n} r c_k x^k - \left(\sum_{k=0}^{n} \sum_{i=0}^k \binom{k}{i} c_k \delta^{k-i}(r) x^i \right) \\
	&= r c_n x^n + r c_{n-1} x^{n-1} - c_n rx^n - n c_n \delta(r) x^{n-1} - c_{n-1}r x^{n-1} + [\text{lower degree terms}] \\
	&= (rc_n - c_n r) x^n + (r c_{n-1} - n c_n \delta(r) - c_{n-1}r) x^{n-1} + [\text{lower degree terms}]
\end{align*}
from which the equalities (a) and (b) follow directly.
\end{proof}

Recall that a nonzero element in $R[x;\delta]$ is said to be \emph{monic} if its highest degree coefficient is equal to $1_R$.

\begin{lem}\label{lem:MonicRSubmodule}
Let $S:=R[x;\delta]$ be a differential polynomial ring
for which $R$ is a simple ring, and such that $C_S(R)=Z(R)$.
Suppose that $M$ is a nonzero $R$-sub-bimodule of $S$.
The following two assertions hold:
\begin{enumerate}[{\rm (a)}]
	\item If $M$ contains an element of degree $n$, then
				$M$ contains a monic element of degree $n$.
	\item $R$ is an $R$-sub-bimodule of $M$.
\end{enumerate}
\end{lem}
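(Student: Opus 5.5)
For (a), I will use the simplicity of $R$ to make the leading coefficient equal to $1_R$. Let $a\in M$ have degree $n$, so $a_n\neq 0$. Consider the set
\begin{displaymath}
	I:=\{ b_n \mid b\in M,\ b=0 \text{ or } \degree(b)\le n\}.
\end{displaymath}
Let $T_n:=\sum_{i=0}^n Rx^i$. Then $M\cap T_n$ is an $R$-sub-bimodule, since $T_n$ is one. For $r,s\in R$ and $b\in M\cap T_n$, Lemma~\ref{lem:HighestDegreesCommutation}(a), or directly \eqref{eq:CommutationRelation}, gives $(rbs)_n=rb_ns$: indeed, the $x^n$-coefficient of $x^k s$ for $k\le n$ is $s$ if $k=n$ and $0$ otherwise. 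Hence $I$ is a two-sided ideal of $R$. It contains $a_n\neq 0$, so by simplicity $I=R$ and $1_R\in I$. This yields some $b\in M\cap T_n$ with $b_n=1_R$, which is a monic element of $M$ of degree $n$. Note that (a) uses only the simplicity of $R$, not the centralizer condition.

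For (b), it suffices to show that $M$ contains a nonzero element of degree $0$. Once $M\cap R\neq 0$, the set $M\cap R$ is a nonzero ideal of $R$, hence equals $R$, so $R\subseteq M$. I will argue by descent on the degree. Let $n$ be the minimal degree of a nonzero element of $M$, and suppose for contradiction that $n>0$. By (a), choose a monic $c\in M$ of degree $n$. For any $r\in R$, the element $rc-cr$ lies in $M$, and by Lemma~\ref{lem:HighestDegreesCommutation}(a) its $x^n$-coefficient is $r\cdot 1-1\cdot r=0$. So $rc-cr$ has degree less than $n$, and minimality forces $rc-cr=0$. Thus $c\in C_S(R)=Z(R)\subseteq R$, which contradicts $\degree(c)=n>0$. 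Hence $n=0$.

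This argument does not even need Lemma~\ref{lem:HighestDegreesCommutation}(b) or $\Char(R)=0$. The main point to check carefully is the claim in (a) that the leading-coefficient map is compatible with both the left and right $R$-actions on $M\cap T_n$. Right multiplication by $r$ could a priori disturb the top coefficient through the $\delta$-terms in \eqref{eq:CommutationRelation}. It does not, because all correction terms have strictly lower degree.
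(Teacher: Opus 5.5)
Your proof is correct and follows the paper's argument. In (a), your ideal of top coefficients of $M\cap T_n$ repackages the paper's explicit element $\sum_j \alpha_j a\beta_j$ with $\sum_j \alpha_j a_n\beta_j=1_R$, and your check via \eqref{eq:CommutationRelation} that $(rbs)_n=rb_ns$ when $\degree(b)\le n$ is the point the paper leaves implicit; (b) is the same minimal-degree commutator argument using $C_S(R)=Z(R)$.
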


\begin{proof}
(a)
Let $a=\sum_{i=0}^n a_i x^i$ be an element of degree $n$, i.e. $a_n\neq 0$.
By simplicity of $R$, there are $\alpha_1,\ldots,\alpha_k,\beta_1,\ldots,\beta_k \in R$
such that
$\sum_{j=1}^k \alpha_j a_n \beta_j = 1_R$.

Consider the element
\begin{displaymath}
	a' := \sum_{j=1}^k \alpha_j a \beta_j \in M.
\end{displaymath}
Note that $(a')_n = \sum_{j=1}^k \alpha_j a_n \beta_j = 1_R$. That is, $a'$ is monic.

(b)
Let $a\in M$ be a nonzero element of smallest possible degree. 
Seeking a contradiction, suppose that $m:=\degree(a)>0$. By (a) we may assume, without loss of generality, that $a$ is monic.
Take $r\in R$.
By Lemma~\ref{lem:HighestDegreesCommutation}(a), and the fact that $a$ is monic, we get that 
$a':=ra-ar \in M$ is either zero or $\degree(a')<m$.
By the minimality of $m$ we conclude that $a'=0$, i.e. $ra=ar$.
Using that $r$ was arbitrarily chosen, this shows that $a\in C_S(R)=Z(R) \subseteq R$, contradicting $\degree(a)=m>0$. 
Thus, $m=0$.
By simplicity of $R$ we conclude that $R \subseteq M$.
\end{proof}

\begin{rem}\label{rem:n-invertible}
If $R$ is a simple unital ring with $\Char(R)=0$, then for every $n\in \Z_{>0}$, the element $n 1_R$ is invertible in $R$.
Indeed, by simplicity, $n 1_R R = R$ from which it follows that $n 1_R$ is invertible.
\end{rem}

\begin{lem}\label{lem:Degrees0tok}
Let $S:=R[x;\delta]$ be a differential polynomial ring
for which $R$ is a simple ring with $\Char(R)=0$, and such that $C_S(R)=Z(R)$. 
Suppose that $M$ is a nonzero $R$-sub-bimodule of $S$.
If $M$ contains an element of degree $n$, then
$M$ contains a monic element of degree $k$ for each $k\in \{0,\ldots,n\}$.
\end{lem}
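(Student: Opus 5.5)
We argue by downward induction on $k$, starting from $k=n$ and stepping down one degree at a time; Lemma~\ref{lem:MonicRSubmodule}(a) reduces everything to producing an element of $M$ of each degree. For $k=n$ there is nothing to do: since $M$ contains an element of degree $n$, Lemma~\ref{lem:MonicRSubmodule}(a) gives a monic element of degree $n$ in $M$.

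For the inductive step, suppose $M$ contains a monic element $c=\sum_{i=0}^k c_i x^i$ of degree $k$ with $1\le k\le n$. We want an element of $M$ of degree exactly $k-1$. For each $r\in R$ the element $rc-cr$ lies in $M$. By Lemma~\ref{lem:HighestDegreesCommutation}(a) its $x^k$-coefficient is $r\cdot 1_R-1_R\cdot r=0$. By Lemma~\ref{lem:HighestDegreesCommutation}(b) its $x^{k-1}$-coefficient is $rc_{k-1}-c_{k-1}r-k\,\delta(r)$. It therefore suffices to find some $r\in R$ with
\begin{displaymath}
	k\,\delta(r)\neq rc_{k-1}-c_{k-1}r .
\end{displaymath}
Then $rc-cr$ is a nonzero element of $M$ of degree $k-1$, and Lemma~\ref{lem:MonicRSubmodule}(a) makes it monic.

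Finding such an $r$ is the main step. Suppose instead that $k\,\delta(r)=rc_{k-1}-c_{k-1}r$ for every $r\in R$. By Remark~\ref{rem:n-invertible} the element $k1_R$ is invertible in $R$, and it is central because it is an integer multiple of $1_R$. Put $b:=(k1_R)^{-1}c_{k-1}$. Then
\begin{displaymath}
	\delta(r)=(k1_R)^{-1}\bigl(rc_{k-1}-c_{k-1}r\bigr)=rb-br=(-b)r-r(-b)
\end{displaymath}
for all $r\in R$, so $\delta$ is inner. But Lemma~\ref{lem:SimpleCentralizerOuter}(ii), applied with the hypothesis $C_S(R)=Z(R)$, says $\delta$ is outer. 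This contradiction produces the required $r$.

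Running the induction from $k=n$ down to $k=1$ yields monic elements of $M$ in all degrees $n,n-1,\ldots,0$. The degree-$0$ case is also covered directly by Lemma~\ref{lem:MonicRSubmodule}(b), since $1_R\in R\subseteq M$. The only delicate points are the invertibility and centrality of $k1_R$, which is exactly where $\Char(R)=0$ and simplicity of $R$ enter, and the appeal to outerness of $\delta$.
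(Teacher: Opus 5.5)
Your proof is correct and is essentially the paper's argument. The paper phrases it as a contradiction: it takes the largest degree $m$ lacking a monic element and applies the commutator $ra-ar$ to a monic $a$ of degree $m+1$, which is exactly your inductive step. Your downward induction is the direct form of that extremal argument, and it reaches degree $0$ without needing Lemma~\ref{lem:MonicRSubmodule}(b).
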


\begin{proof}
Suppose that $M$ contains an element of degree $n$.
If $n=0$, then the claim follows from Lemma~\ref{lem:MonicRSubmodule}(b).
Now, assume that $n>0$. 
By Lemma~\ref{lem:MonicRSubmodule}(a),
$M$ contains a monic element of degree $n$.

Seeking a contradiction, suppose that the assertion fails.
Let $m$ be the largest integer
in $\{0,\ldots,n\}$ such that $M$ contains no monic element of degree $m$.
Note that $0<m<n$.
By the maximality of $m$, $M$ contains a monic element
$a=x^{m+1}+\sum_{i=0}^m a_ix^i$.

For $r\in R$, consider $a':=ra-ar \in M$. 
By Lemma~\ref{lem:HighestDegreesCommutation}(a), we have $(a')_{m+1}=r 1_R - 1_Rr=0$.
We claim that, for some $r\in R$, we must have $(a')_m \neq 0$.
Otherwise, Lemma~\ref{lem:HighestDegreesCommutation}(b) yields
$ra_m-(m+1)1_R \delta(r)-a_mr=0$
for every $r\in R$. Using that $\Char(R)=0$, Remark~\ref{rem:n-invertible} gives that
$(m+1)1_R$ is invertible. Hence,
$\delta(r) = (-(m+1)^{-1}a_m)r - r(-(m+1)^{-1}a_m)$
for every $r\in R$. 
Thus $\delta$ is inner, which is a contradiction, by Lemma~\ref{lem:SimpleCentralizerOuter}, because $C_S(R)=Z(R)$.

Therefore there is some $r\in R$ such that $a'=ra-ar$ has degree $m$. By
Lemma~\ref{lem:MonicRSubmodule}(a), $M$ contains a monic element of degree $m$, contradicting the choice
of $m$. Hence, $M$ contains a monic element of degree $k$
for every $k\in\{0,\ldots,n\}$.
\end{proof}

\begin{lem}\label{lem:SubBiModuleInductive}
Let $S:=R[x;\delta]$ be a differential polynomial ring
for which $R$ is a simple ring with $\Char(R)=0$, and such that $C_S(R)=Z(R)$.
Suppose that $M$ is a nonzero $R$-sub-bimodule of $S$ containing an element of degree $n$.
Then $\sum_{i=0}^{n} Rx^i \subseteq M$.
\end{lem}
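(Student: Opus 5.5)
Induction on $n$, combining Lemma~\ref{lem:Degrees0tok} with the fact that $M$ is closed under left multiplication by $R$.

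The case $n=0$ is Lemma~\ref{lem:MonicRSubmodule}(b), which gives $R\subseteq M$. Suppose $n>0$. Since $M$ contains an element of degree $n$, Lemma~\ref{lem:Degrees0tok} shows that $M$ contains a monic element of degree $k$ for every $k\in\{0,\ldots,n\}$. In particular, $M$ contains an element of degree $n-1$. Applying the induction hypothesis to $M$ with $n-1$ in place of $n$ gives $\sum_{i=0}^{n-1}Rx^i\subseteq M$. (If one prefers, the induction can be made internal to a single fixed $M$, on the statement ``$\sum_{i=0}^{k}Rx^i\subseteq M$ for all $k\le n$''.)

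Now take a monic element $a=x^n+\sum_{i=0}^{n-1}a_ix^i\in M$, which exists by Lemma~\ref{lem:Degrees0tok}. Since $\sum_{i=0}^{n-1}a_ix^i\in\sum_{i=0}^{n-1}Rx^i\subseteq M$, we get
\begin{displaymath}
x^n = a-\sum_{i=0}^{n-1}a_ix^i\in M.
\end{displaymath}
Because $M$ is a left $R$-module, $Rx^n\subseteq M$. Hence $\sum_{i=0}^{n}Rx^i\subseteq M$, which completes the induction.

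There is no serious obstacle here. The substantive work, producing monic elements in every degree, was done in Lemma~\ref{lem:Degrees0tok}, and that is where the hypotheses that $\delta$ is outer and $\Char(R)=0$ entered. The only point needing care is that the induction hypothesis is applied to the same bimodule $M$, which is legitimate because the existence of an element of degree $n-1$ in $M$ is guaranteed by Lemma~\ref{lem:Degrees0tok}.
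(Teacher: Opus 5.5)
Your proof is correct and is essentially the paper's argument: both use Lemma~\ref{lem:Degrees0tok} to get monic elements in every degree up to $n$, and then inductively remove the lower-degree part via $x^k = b-\sum_{i<k} b_i x^i \in M$. The only cosmetic difference is that you induct on $n$ and reapply the hypothesis to the same $M$, whereas the paper fixes $M$ and inducts on $k\in\{0,\ldots,n\}$, which is exactly the internal variant you mention.
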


\begin{proof}
By Lemma~\ref{lem:MonicRSubmodule}(b), we have $R \subseteq M$.
Suppose that $n>0$.
We claim that $R x^k \subseteq M$ for every $k \in \{0,\ldots,n\}$.
If we assume that the claim holds, then $\sum_{i=0}^n Rx^i \subseteq M$.
Now we prove the claim by induction.
The base case $k=0$ is clear.
By Lemma~\ref{lem:Degrees0tok}, $M$ contains monic elements of degrees $0, \ldots, n$.
Take $k \in \{1,\ldots,n\}$ and suppose that $\sum_{i=0}^{k-1} Rx^i \subseteq M$.
Let $b = x^k + \sum_{i=0}^{k-1} b_ix^i \in M$ be monic of degree $k$.
Then
$x^k = b - \sum_{i=0}^{k-1} b_ix^i \in M$.
Hence, $Rx^k \subseteq M$.
\end{proof}

\begin{prop}\label{prop:Sufficient}
Let $S:=R[x;\delta]$ be a differential polynomial ring 
for which $R$ is a simple ring with $\Char(R)=0$, and such that $C_S(R)=Z(R)$.
Suppose that $M$ is a nonzero $R$-sub-bimodule of $S$.
Then either
$M=R[x;\delta]$
or
$M=\sum_{i=0}^n R x^i$ for some $n\in \Z_{\geq 0}$.
That is, $R[x;\delta]$ is strongly simple.
\end{prop}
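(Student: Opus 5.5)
We split according to whether the degrees of the elements of $M$ are bounded. Since $M$ is nonzero, the set $D:=\{\degree(a) \mid 0\neq a\in M\}$ is a nonempty subset of $\Z_{\geq 0}$. Lemma~\ref{lem:SubBiModuleInductive} will do almost all of the work.

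\emph{Case 1: $D$ is bounded.} Let $n:=\max D$. Then $M$ contains an element of degree $n$, so Lemma~\ref{lem:SubBiModuleInductive} gives $\sum_{i=0}^n Rx^i\subseteq M$. Conversely, every nonzero element of $M$ has degree at most $n$. Hence it lies in $\sum_{i=0}^n Rx^i$, because $S$ is free as a left $R$-module on $\{x^i\}$. Therefore $M=\sum_{i=0}^n Rx^i$.

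\emph{Case 2: $D$ is unbounded.} Take any $s=\sum_{i=0}^N s_ix^i\in S$. Choose $a\in M$ with $\degree(a)\geq N$. Lemma~\ref{lem:SubBiModuleInductive} then gives $\sum_{i=0}^{\degree(a)}Rx^i\subseteq M$, and this sum contains $s$. So $S\subseteq M$, and hence $M=S$.

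There is no real obstacle here. The substantive work, namely producing monic elements in every lower degree and peeling off $x^k$ inductively, is already done in Lemmas~\ref{lem:Degrees0tok} and~\ref{lem:SubBiModuleInductive}. The only care needed is to use that every element of $S$ has a unique finite expansion $\sum a_kx^k$, which justifies both the inclusion $M\subseteq\sum_{i=0}^nRx^i$ in Case~1 and the covering argument in Case~2. The final sentence of the statement, that $R[x;\delta]$ is strongly simple, is then exactly Definition~\ref{DefBM}.
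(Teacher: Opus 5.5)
Your proposal is correct and follows essentially the same route as the paper: a case split on whether the degrees of nonzero elements of $M$ are bounded, with Lemma~\ref{lem:SubBiModuleInductive} giving $\sum_{i=0}^{n}Rx^i\subseteq M$ in both cases. In the bounded case, the reverse inclusion $M\subseteq\sum_{i=0}^{n}Rx^i$ follows from the unique finite expansion of elements of $S$, exactly as you say.
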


\begin{proof}
Write $T_k:=\sum_{i=0}^k Rx^i$ for $k\in \Z_{\geq 0}$. 
If the degrees of elements of $M$ are unbounded, then for every $k\geq 0$, $M$ contains an element of degree at least $k$. Hence, by Lemma~\ref{lem:SubBiModuleInductive}, $T_k \subseteq M$. Thus, $M=S$.
Otherwise, let
$m:=\max\{\deg(a)\mid a\in M \setminus \{0\}\}$.
Then $M\subseteq T_m$.
Since $M$ contains an element of degree $m$, Lemma~\ref{lem:SubBiModuleInductive}
yields $T_m \subseteq M$. Thus, $M=T_m$.
This shows that $R[x;\delta]$ is strongly simple.
\end{proof}

\begin{lem}\label{lem:CentralizerOuter}
Let $S:=R[x;\delta]$ be a differential polynomial ring.
Suppose that $R$ is simple with $\Char(R)=0$, and that $\delta$ is outer.
Then $C_S(R)=Z(R)$.
\end{lem}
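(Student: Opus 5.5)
We show $C_S(R)\subseteq Z(R)$; the reverse inclusion is clear. Take a nonzero $a=\sum_{i=0}^n a_i x^i\in C_S(R)$ with $a_n\neq 0$. The aim is to rule out $n>0$. Once $n=0$, we have $a\in R$ commuting with all of $R$, so $a\in Z(R)$.

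First we normalise the leading coefficient. Comparing degree-$n$ coefficients in $ra=ar$ via Lemma~\ref{lem:HighestDegreesCommutation}(a) gives $ra_n=a_nr$ for all $r\in R$, so $a_n\in Z(R)$. The center of a simple ring is a field, so $a_n$ is invertible and central. Put $b:=a_n^{-1}a$. Since $a_n^{-1}$ is central, $b\in C_S(R)$, and $b$ is monic of degree $n$.

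Now suppose $n>0$. Compare the degree-$(n-1)$ coefficients in $rb-br=0$ using Lemma~\ref{lem:HighestDegreesCommutation}(b). This gives
\begin{displaymath}
r b_{n-1} - n\,\delta(r) - b_{n-1} r = 0
\end{displaymath}
for every $r\in R$. Since $\Char(R)=0$ and $R$ is simple, Remark~\ref{rem:n-invertible} says $n1_R$ is invertible. Moreover $n1_R$ is central, since it is an integer multiple of $1_R$. Hence
\begin{displaymath}
\delta(r)=(-n^{-1}b_{n-1})r-r(-n^{-1}b_{n-1})
\end{displaymath}
for every $r\in R$, exactly as in the proof of Lemma~\ref{lem:Degrees0tok}. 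So $\delta$ is inner, contradicting the hypothesis. Therefore $n=0$, and $C_S(R)=Z(R)$.

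The main obstacle is the normalisation step: we need the leading coefficient to be central and invertible so that Lemma~\ref{lem:HighestDegreesCommutation}(b) reduces cleanly to an inner-derivation identity. Centrality comes from part (a), and invertibility from simplicity, since a nonzero central element $z$ generates the two-sided ideal $zR=R$.
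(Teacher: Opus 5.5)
Your proposal is correct and follows essentially the same approach as the paper: both show the leading coefficient is central and hence invertible, then use Lemma~\ref{lem:HighestDegreesCommutation}(b) and the invertibility of $n1_R$ to exhibit $\delta$ as inner. The only cosmetic difference is that you first normalise to a monic element, whereas the paper keeps $c_n$ and directly writes $\delta(r) = (-c_{n-1}(nc_n)^{-1})r - r(-c_{n-1}(nc_n)^{-1})$.
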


\begin{proof}
We prove the contrapositive statement.
Suppose that
$C_S(R)\neq Z(R)$ and
choose some $c = \sum_{i=0}^n c_i x^i \in C_S(R)$ of degree $n>0$.

For every $r\in R$, we have $rc=cr$, and hence by Lemma~\ref{lem:HighestDegreesCommutation}, it follows that
\begin{itemize}
	\item $rc_n = c_n r$, and 
	\item $rc_{n-1} = n c_n \delta(r) + c_{n-1}r$.
\end{itemize}
Thus, $c_n \in Z(R)$, and $c_n R$ is a nonzero ideal of $R$.
Hence, by simplicity of $R$, we have $c_n R = R$. In particular, $c_n$ is invertible in $R$.
From the above and Remark~\ref{rem:n-invertible}, we get that
$\delta(r) = (-c_{n-1} (nc_n)^{-1}) r - r (-c_{n-1} (n c_n)^{-1})$ for every $r\in R$.
This shows that $\delta$ is inner.
\end{proof}

\begin{exmp}\label{ex:RationalFunctions}
Let $R:=\Q(t)$ be the field of rational functions in one indeterminate $t$ over $\Q$, the field of rational numbers.
Clearly, $R$ is simple and $\Char(R)=0$.
Define $\delta : R \to R$ by $\delta(f):=\frac{\partial f}{\partial t}$.
It is easy to verify that $\delta$ is a nonzero derivation on the commutative ring $R$.
Hence, $\delta$ is outer.
By Lemma~\ref{lem:CentralizerOuter} and Proposition~\ref{prop:Sufficient}, 
the differential polynomial ring $R[x;\delta]$ is strongly simple.
\end{exmp}

\begin{exmp}\label{ex:RationalFunctionsMatrix}
We now construct an example where $R$ is a non-commutative ring that is not a division ring.
Let $\delta : \Q(t) \to \Q(t)$ be the derivation from Example~\ref{ex:RationalFunctions}.
Put $R:=M_2(\Q(t))$.
The map $\widetilde{\delta} : R \to R$ defined by $[\widetilde{\delta}(f)]_{i,j} := \delta(f_{i,j})$
is a derivation. 
Clearly, $R$ is simple and $\Char(R)=0$.
Let $I_2$ denote the identity matrix in $R$.
Note that $\widetilde{\delta}(tI_2)=I_2$.
Seeking a contradiction, suppose that $\widetilde{\delta}$ is inner.
Then $\widetilde{\delta}(Z(R))=0$.
This contradicts the fact that $tI_2 \in Z(R)$.
We conclude that $\widetilde{\delta}$ is outer.
Hence, by Lemma~\ref{lem:CentralizerOuter} and Proposition~\ref{prop:Sufficient}, 
the differential polynomial ring $R[x;\widetilde{\delta}]$ is strongly simple.
\end{exmp}

\section{Proof of the main result}\label{Sec:MainResult}

In this section, we combine the preceding results and observations to prove the main result.

\begin{proof}[Proof of Theorem~\ref{thm:Main}]
(i)$\Rightarrow$(ii)
This follows from Proposition~\ref{Prop:NecessaryConditions}.

(ii)$\Rightarrow$(iii)
This follows from Lemma~\ref{lem:SimpleCentralizerOuter}.

(iii)$\Rightarrow$(iv)
This follows from Lemma~\ref{lem:CentralizerOuter}.

(iv)$\Rightarrow$(i)
This follows from Proposition~\ref{prop:Sufficient}.
\end{proof}

\begin{rem}
By \cite[Thm.~4.1.4]{Jordan1975}, 
the conditions appearing in Theorem~\ref{thm:Main}(iii)
imply that $R[x;\delta]$ is a simple ring.
By \cite[Thm.~4.15]{Oinert2013},
the conditions appearing in Theorem~\ref{thm:Main}(iv)
can also easily be shown to imply that $R[x;\delta]$ is a simple ring.
\end{rem}

\end{document}